\newcommand*{\circled}[1]{%
\tikz[baseline=(char.base)]{%
    \node[shape=circle, draw, inner sep=0.5pt, minimum size=1em] (char) {\small #1};%
}%
}
\newtheorem{theorem}{Theorem}[section]
\newtheorem{notation}[theorem]{Notation}
\newtheorem{prop}[theorem]{Proposition}
\newtheorem{theo}[theorem]{Theorem}
\theoremstyle{definition}
\newtheorem{defi}[theorem]{Definition}
\newtheorem{example}[theorem]{Example}
\theoremstyle{remark}
\newtheorem{remark}[theorem]{Remark}
\newcommand{\Rmnum}[1]{\expandafter\@slowromancap\romannumeral #1@}
\numberwithin{equation}{section}
\title{On the length of a class of maximal commutative subalgebras}
\author{Chengjie Wang}
\begin{document}
\begin{abstract}
A maximal commutative subalgebra is a substructure in algebra with the greatest commutative property. By studying the lengths of maximal commutative subalgebras, one can more clearly characterize the structure of commutative subalgebras in the full matrix algebra $\mathrm{M}_n({\mathbb{F}})$. Inspired by \cite[Proposition~4.12]{markova2013}, this paper identifies a class of maximal commutative subalgebras $\mathcal{B}_{k,m,l}$ and computes their lengths. Finally, we present two concrete examples to show that it is not a straightforward generalization.
\medskip

\raggedright\text{Keywords:} {the full matrix algebra; maximal commutative subalgebras; length of an algebra; generating systems; generating systems of an algebra}
\end{abstract}
\maketitle

\section{Introduction}
\label{S1}
Maximal commutative subalgebras of the full matrix algebra $\mathrm{M}_n({\mathbb{F}})$ are fundamentally important in many areas of mathematics. They play a crucial role in studying algebraic structures, analyzing matrix algebras, and advancing related research fields. 

Due to differences in research objects and conditions, the construction methods for maximal commutative subalgebras are also diverse. In 1991, Laffey and Lazarus \cite{ls1991} investigated the commutative subalgebra $\mathcal{A}$ generated by a pair of commuting matrices $\mathcal{S}=\{A, B\}$. It was established that for a matrix $A$ similar to $J_{k}\oplus \cdots \oplus J_k$ (where all Jordan blocks are identical in size) over an algebraically closed field $\mathbb{F}$, the subalgebra $\mathcal{A}$ is maximal commutative if and only if ${\rm dim}_{\mathbb{F}}\mathcal{A}=n$. In 1993, Brown and Call \cite{bc1993} conducted a thorough investigation of maximal commutative subalgebras in the full matrix algebra $\mathrm{M}_n({\mathbb{F}})$. They proposed the $(B, N)$-construction and established that this approach could generate certain classical commutative subalgebras of dimension less than $n$, including Courter's algebra. In 1994, Brown \cite{brown1994} proposed two additional constructions: the $C_1$ and $C_2$-constructions, which yield distinct classes of maximal commutative subalgebras that are pairwise non-inclusive. In 1997, Brown \cite{brown1997} proved that for the full matrix algebra $\mathrm{M}_n({\mathbb{F}})$ of low order (i.e., $n\le 5$) over an algebraically closed field $\mathbb{F}$, all maximal commutative subalgebras $\mathcal{A}\in L$ (except for one isomorphism class) are either $C_1$ or $C_2$-constructions. Here, the set $L$ is defined as 
$$L=\{(\mathcal{B}, J,\mathbb{F}) \vert~\mathcal{B}~\text{is a local algebra with nilpotency index}~N(J)\ge 2\}.$$ 
In the same year, Song \cite{song1997} constructed a new class of maximal commutative subalgebras 
$$S=\mathbb{F}[\delta_1, \cdots, \delta_8, E_{1,1}, E_{1,2}, E_{2,1}, E_{2,2}],$$ 
proving that it is non-isomorphic to the known Courter's algebra. Consequently, it was established that for maximal commutative subalgebras of $14\times14$ matrices whose dimension is $13$ and index of Jacobson radical is $3$, there are at least two isomorphism classes. Furthermore, the study revealed that the $(B, N)$-construction depends on the field $\mathbb{F}$. In 2002, Song \cite{song2002} further investigated maximal commutative subalgebras with nilpotency index $3$ and specific socle forms, explicitly constructing a new class of maximal commutative subalgebras isomorphic to them. Subsequently, in 2003, Song \cite{song2003} introduced the $C_{2}^{2}$-construction for building maximal commutative subalgebras of $\mathrm{M}_n({\mathbb{F}})$, advancing the development of this research field. 

In this paper, we construct a class of maximal commutative subalgebras and determine their lengths, establishing the following result (see Theorems \ref{dinglijida} and \ref{dinglijida1}):
{
\renewcommand{\thetheorem}{\Alph{theorem}}
\begin{theo}\label{d}
Let $m, l\in \mathbb{N}$, $k\in \mathbb{Z}^+$, $l>m+k+1$, and $l+k+1\le n$. Consider the subalgebra $\mathcal{B}_{k,m,l}\subseteq \mathrm{M}_n(\mathbb{F})$ generated by the matrices
$$\mathbb{E}_n, B_1=E_{m,m+1}+E_{m+1,m+2}+\cdots+E_{m+k,m+k+1}, $$
$$B_2=E_{l,l+1}+E_{l+1,l+2}+\cdots+E_{l+k,l+k+1}, E_{i,j},$$
where $1\le i\le m$ or $i=l$, and $m+k+1\le j\le l-1$ or $l+k+1\le j\le n$. Then $\mathcal{B}_{k,m,l}$ is a maximal commutative subalgebra in $\mathrm{M}_n(\mathbb{F})$ of length $\ell\left(\mathcal{B}_{k,m,l}\right)=k+1$.
\end{theo}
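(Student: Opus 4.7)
My first step is to exploit the disjointness of index ranges forced by $l>m+k+1$ and compute all pairwise products of the generators. A short case check yields $B_1B_2=B_2B_1=0$, $B_1E_{i,j}=E_{i,j}B_1=0$, $B_2E_{i,j}=E_{i,j}B_2=0$, and $E_{i,j}E_{i',j'}=0$ for any two generators in the listed set, because the column index of the first factor never meets the row index of the second. This shows that $\mathcal{B}_{k,m,l}$ is commutative and is spanned linearly by
\[
\mathbb{E}_n,\quad B_1^{a}\ (1\le a\le k+1),\quad B_2^{b}\ (1\le b\le k+1),\quad E_{i,j}\ ((i,j)\in\mathcal I),
\]
where $\mathcal I$ denotes the index set of the $E_{i,j}$ generators and $B_1^{a}=\sum_{s=0}^{k+1-a}E_{m+s,m+s+a}$, analogously for $B_2^{b}$. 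In particular $B_1^{k+1}=E_{m,m+k+1}\ne 0$ while $B_1^{k+2}=0$.

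\textbf{Maximality.} I would let $X=(x_{pq})\in\mathrm{M}_n(\mathbb F)$ commute with every generator. Expanding $[X,E_{i,j}]=0$ gives $x_{p,i}\delta_{j,q}=\delta_{p,i}x_{j,q}$ for all $p,q$, which forces $x_{p,i}=0$ whenever $p\ne i$, $x_{j,q}=0$ whenever $q\ne j$, and $x_{i,i}=x_{j,j}$. Letting $(i,j)$ vary over $\mathcal I$ zeroes every column indexed by $\{1,\dots,m,l\}$ and every row indexed by $\{m+k+1,\dots,l-1\}\cup\{l+k+1,\dots,n\}$ off the diagonal, and pins all of those diagonal entries to a common value $\lambda$. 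What might remain nonzero then sits inside the two principal blocks $\{m,\dots,m+k+1\}^{2}$ and $\{l,\dots,l+k+1\}^{2}$, together with the rectangular strip $\{1,\dots,m,l\}\times(\{m+k+1,\dots,l-1\}\cup\{l+k+1,\dots,n\})$. Imposing $[X,B_1]=0$ reads entrywise as
\[
[q\in\{m+1,\dots,m+k+1\}]\,x_{p,q-1}\;=\;[p\in\{m,\dots,m+k\}]\,x_{p+1,q},
\]
which (i) transports $\lambda$ along the entire diagonal of the first block, (ii) chains every strictly lower-triangular entry of that block to a boundary row or column that has already been annihilated, and (iii) makes strictly upper-triangular entries constant along each super-diagonal. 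So the entries of $X$ in the first block match those of some $\lambda\mathbb{E}_n+\sum_{a=1}^{k+1}c_{a}B_1^{a}$. A symmetric computation with $B_2$ treats the second block, the strip entries freely realize the coefficients of the $E_{i,j}$ generators, and hence $X\in\mathcal{B}_{k,m,l}$.

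\textbf{Length.} The algebra $\mathcal{B}_{k,m,l}$ is local commutative: its Jacobson radical $\mathcal J$ is spanned by the nilpotent basis elements above and $\mathcal{B}_{k,m,l}/\mathcal J\cong\mathbb F$. Because among these basis elements the only nonvanishing products are $B_1^{a}B_1^{a'}=B_1^{a+a'}$ and $B_2^{b}B_2^{b'}=B_2^{b+b'}$, one finds that $\mathcal J^{r}$ is spanned by the $B_1^{a},B_2^{b}$ with $a,b\ge r$, so $\mathcal J$ has nilpotency index exactly $k+2$. The standard estimate $\ell(\mathcal A)\le(\text{nilpotency index of }\mathcal J)-1$ for local commutative algebras then yields $\ell(\mathcal{B}_{k,m,l})\le k+1$. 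For the matching lower bound I would pick the generating set
\[
\mathcal{S}_1=\{\mathbb{E}_n,B_1,B_2\}\cup\{E_{i,j}:(i,j)\in\mathcal I,\ (i,j)\ne(m,m+k+1)\},
\]
which still generates $\mathcal{B}_{k,m,l}$ since $B_1^{k+1}=E_{m,m+k+1}$. Yet every word of length $\le k$ in $\mathcal{S}_1$ lies in the linear span of $\mathbb{E}_n$, of the powers $B_1^{a},B_2^{b}$ with $a,b\le k$, and of the retained $E_{i,j}$'s, none of which has support containing the position $(m,m+k+1)$. Therefore $B_1^{k+1}\notin L_{k}(\mathcal S_1)$, forcing $\ell(\mathcal S_1)\ge k+1$ and hence $\ell(\mathcal{B}_{k,m,l})\ge k+1$.

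\textbf{Main obstacle.} The delicate step is the maximality bookkeeping: combining the selective row/column annihilation coming from the many $E_{i,j}$ relations with the diagonal-propagation coming from $B_1$ and $B_2$ over two overlapping $(k+2)\times(k+2)$ windows, and verifying that no lower-triangular entry in either window survives. Once this block structure is in place, the length calculation is a clean application of the local-commutative nilpotency estimate together with a deliberate pruning of the generator list that exposes a word of length exactly $k+1$.
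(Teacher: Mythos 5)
Your proposal is correct and follows essentially the same route as the paper: the same multiplication table among the generators, the same centralizer computation for maximality, the upper bound $\ell\le k+1$ from the nilpotency index $k+2$ of the radical of this local algebra, and the lower bound from a generating system obtained by deleting $E_{m,m+k+1}$ so that $B_1^{k+1}=E_{m,m+k+1}$ requires a word of length $k+1$ (the paper deletes $E_{l,l+k+1}$ as well, which changes nothing). One small presentational point: your intermediate claim that after imposing only the $[X,E_{i,j}]=0$ relations the surviving entries lie in the two windows plus the strip is slightly overstated (positions such as $(m+1,q)$ with $q\notin\{m+1,\dots,m+k+1\}$, or $(p,m+1)$ with $p<m$, also survive at that stage), but these are exactly the entries annihilated by the boundary part of the $[X,B_1]=0$ and $[X,B_2]=0$ relations that you impose next, so the argument goes through as in the paper.
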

}
Theorem \ref{d}, inspired by \cite[Proposition~4.12]{markova2013}, is illustrated with two concrete examples demonstrating that it does not generalize \cite[Proposition~4.12]{markova2013}.

The paper is organized as follows. Section \ref{S2} contains some definitions and auxiliary results;
Section \ref{S3} is devoted to constructing a class of maximal commutative subalgebras $\mathcal{B}_{k,m,l}$ and computing the length of this algebra.

\section{Preliminaries}
\label{S2}
Let $\mathcal{A}$ be a finite-dimensional associative algebra with identity over an arbitrary field $\mathbb{F}$. Since $\mathcal{A}$ is finite-dimensional over $\mathbb{F}$, it is evidently finite generated. Let $\mathcal{S}=\{a_1, a_2, \cdots, a_k\}$ be a finite generating set of this algebra. 

Firstly, it is necessary to define the length of a word of $\mathcal{S}$.

\begin{defi}
Words in $\mathcal{S}$ are products of a finite number of elements from the alphabet $\mathcal{S}$. The length of a word equals the number of factors in the corresponding product. 
\end{defi}

\begin{remark}
  The length of the word $a_{i_1}\cdots a_{i_t}$, where $a_{i_j}\in\mathcal{S}$, is equal to $t$. The unity $1$ of the algebra is considered as a word of length $0$ over $\mathcal{S}$ and also call it the empty word.  
\end{remark}

\begin{defi}
For $i\geq 0$, let $\mathcal{S}^i$ denote the set of all words in the alphabet $\mathcal{S}$ of length less than or equal to $i$. Let $\mathcal{S}^i\backslash\mathcal{S}^{i-1}$ be the set of all words of length $i$ over $\mathcal{S}$, $i\geq 1$.
\end{defi}

\begin{defi}
The linear span of $ \mathcal{S}$ in a vector space over $\mathbb{F}$, denoted by $\langle\mathcal{S}\rangle$, is the set of all finite $\mathbb{F}$-linear combinations.
\end{defi}

The length functions associated with a generating set $\mathcal{S}$ and an algebra $\mathcal{A}$ are formally defined as follows.

\begin{defi}
Let $L_i\left(\mathcal{S}\right)=\langle\mathcal{S}^i\rangle$ denote the linear span of the words in $\mathcal{S}^i$. Note that 
$$L_0\left(\mathcal{S}\right)=\langle 1\rangle = \mathbb{F}$$
for unitary algebras. Also set $L\left(\mathcal{S}\right)$=$\bigcup\limits_{i=0}^{\infty}L_i\left(\mathcal{S}\right)$ be the linear span of all words in the alphabet $\mathcal{S}$.
\end{defi}

\begin{defi}
A set $\mathcal{S}$ is a generating system for $\mathcal{A}$ if and only if $\mathcal{A}$=$L\left(\mathcal{S}\right)$.
\end{defi}

\begin{remark}
    Since $\mathcal{A}$ is finite-dimensional over $\mathbb{F}$ if follows that the chain 
\begin{equation*}
    \mathbb{F}=L_0(\mathcal{S})\subset L_1(\mathcal{S})\subset L_2(\mathcal{S})\subset \cdots
\end{equation*}       
stabilizes. That is, there is an integer $k$ such that $L_k(\mathcal{S})= L_{k+i}(\mathcal{S})$, for all $i\ge 0$. We must have $L_k(\mathcal{S})=\mathcal{A}$ , since $\mathcal{S}$
is a generating set for $\mathcal{A}$.
\end{remark}

\begin{defi}
The length of a generating system $\mathcal{S}$ of a finite-dimensional algebra $\mathcal{A}$ is defined as the minimum non-negative integer $k$ such that $L_k\left(\mathcal{S}\right)=\mathcal{A}$. The length of $\mathcal{S}$ is denoted by 
\begin{equation*}
\ell\left(\mathcal{S}\right)=\mathop{\min}\{k\in\mathbb{Z}^+:L_k\left(\mathcal{S}\right)=\mathcal{A}\}.
\end{equation*}
\end{defi}

\begin{defi}
The length of an algebra $\mathcal{A}$ is defined as the maximum of the lengths of its generating systems $\mathcal{S}$ such that $L\left(\mathcal{S}\right)=\mathcal{A}$. The length of $\mathcal{A}$ is denoted by 
\begin{equation*}
\ell\left(\mathcal{A}\right)=\mathop{\max}\{\ell\left(\mathcal{S}\right):L\left(\mathcal{S}\right)=\mathcal{A}\}.
\end{equation*}
\end{defi}

\begin{notation}
Let $x \in \mathnormal{R}$. The integer part of $x$, denoted $\lfloor{x}\rfloor$, is defined as the largest integer not exceeding $x$. 
\end{notation}

\begin{notation}
Let $\mathrm{M}_n(\mathbb{F})$ denote the algebra of $\mathnormal{n}\times\mathnormal{n}$ matrices over an arbitrary field $\mathbb{F}$. 

$\bullet$ $E_{ij}:$ The $\left(i, j\right)$-th matrix unit, i.e., the matrix with $1$ at the $\left(i, j\right)$ position and zeros at the remaining positions. 

$\bullet$ $E_n:$ The $n\times n$ identity matrix. 
\end{notation}

Next, we will introduce some fundamental definitions regarding the maximal commutative subalgebras of the full matrix algebra  \(\mathrm{M}_n(\mathbb{F})\).

\begin{defi}
\label{dingyijida}
If $\mathcal{B}\subseteq \mathrm{M}_n(\mathbb{F})$ is a commutative subalgebra such that there exists no commutative subalgebra $\mathcal{B}^{\prime}$ in $\mathrm{M}_n(\mathbb{F})$ satisfying $\mathcal{B}\subsetneq \mathcal{B}^{\prime}$ (i.e., $\mathcal{B}$ is maximal under inclusion among all commutative subalgebras of $\mathrm{M}_n(\mathbb{F})$), then $\mathcal{B}$ is called a \textbf{maximal commutative subalgebra}.
\end{defi}

The following provides an equivalent characterization of maximal commutative subalgebras:

\begin{remark}
$\mathcal{B}$  is a maximal commutative subalgebra if and only if its centralizer in $\mathrm{M}_n(\mathbb{F})$ coincides with itself, that is,
$$C_{\mathrm{M}_n(\mathbb{F})}(\mathcal{B})=\{X\in\mathrm{M}_n(\mathbb{F})~\vert~\text{for all}~A\in \mathcal{B},~\text{we have}~XA=XA\}=\mathcal{B}.$$
\end{remark}

\begin{defi}
An associative algebra $\mathcal{A}$ is called \textbf{local} if it has a unique maximal right ideal.
\end{defi}

\begin{defi}
The \textbf{Jacobson radical} of an associative ring is the intersection of all its maximal right (left) ideals.
\end{defi}

\begin{remark}
It is well known that the set of all noninvertible elements of a local algebra coincides with its Jacobson radical. The Jacobson radical $J$ of an Artinian ring is nilpotent, i.e., there exists a number $N$ such that $J^N=(0)$, but $J^{N-1}\neq (0)$; here $J^N$ denotes the set of products of elements from $J$ of length $N$, and the number $N$ is called the nilpotency index of the ideal $J$. In particular, the Jacobson radical of a finite-dimensional algebra is nilpotent.
\end{remark}

\begin{theo}[{\cite[Corollary~3.32]{markova2013}}]
\label{dingli2}
Let $\mathbb{F}$ be an arbitrary field, and $\mathcal{A}$ be a finite-dimensional local $\mathbb{F}$-algebra with the Jacobson radical $J(\mathcal{A})$. Let also $N$ stand for the nilpotency index of $J(\mathcal{A})$. Let $\mathcal{A}=\mathbb{F}1_{\mathcal{A}} +J(\mathcal{A})$. Then $\ell\left(\mathcal{S}\right)\le N-1$.
\end{theo}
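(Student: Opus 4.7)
The plan is to exploit the hypothesis $\mathcal{A}=\mathbb{F}\cdot 1_{\mathcal{A}}+J(\mathcal{A})$---which is in fact a direct sum because $J(\mathcal{A})$ is a proper ideal and therefore does not contain $1_{\mathcal{A}}$---to replace an arbitrary generating set of $\mathcal{A}$ by one contained in $J(\mathcal{A})$, at which point the nilpotency $J(\mathcal{A})^N=0$ terminates the chain $L_i(\cdot)$ in at most $N-1$ steps. So fix any generating system $\mathcal{S}=\{s_1,\ldots,s_r\}$ of $\mathcal{A}$, decompose uniquely $s_i=\lambda_i 1_{\mathcal{A}}+s_i'$ with $\lambda_i\in\mathbb{F}$ and $s_i'\in J(\mathcal{A})$, and set $\mathcal{S}'=\{s_1',\ldots,s_r'\}\subseteq J(\mathcal{A})$.

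Step 1 (equality of the filtrations). I would first verify that $L_k(\mathcal{S})=L_k(\mathcal{S}')$ for every $k\ge 0$. For the inclusion $\subseteq$, substituting $s_{i_j}=\lambda_{i_j}1_{\mathcal{A}}+s_{i_j}'$ into a word $s_{i_1}\cdots s_{i_k}$ and expanding gives a linear combination indexed by subsets $I\subseteq\{1,\ldots,k\}$ of the sub-products $\prod_{j\in I}s_{i_j}'$, each of length $|I|\le k$, so the word lies in $L_k(\mathcal{S}')$. The reverse inclusion follows symmetrically from $s_i'=s_i-\lambda_i 1_{\mathcal{A}}$, using $1_{\mathcal{A}}\in L_0(\mathcal{S})$. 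Crucially, no word of length greater than $k$ appears on either side of the expansion.

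Step 2 (nilpotency closes the chain). Since $\mathcal{S}'\subseteq J(\mathcal{A})$, every word of length $\ge N$ in $\mathcal{S}'$ lies in $J(\mathcal{A})^N=0$, so $L_N(\mathcal{S}')=L_{N-1}(\mathcal{S}')$. The standard stabilization argument---if $L_{k+1}(\mathcal{S}')=L_k(\mathcal{S}')$ then any length-$(k+2)$ word is $w\cdot s'$ with $w\in L_{k+1}(\mathcal{S}')=L_k(\mathcal{S}')$, hence $ws'\in L_{k+1}(\mathcal{S}')$---propagates this to all higher indices, giving $L_{N-1}(\mathcal{S}')=L(\mathcal{S}')$. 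Since $\mathcal{S}$ generates $\mathcal{A}$, Step~1 forces $L(\mathcal{S}')=\mathcal{A}$, whence $L_{N-1}(\mathcal{S})=L_{N-1}(\mathcal{S}')=\mathcal{A}$, i.e.\ $\ell(\mathcal{S})\le N-1$. Taking the maximum over generating systems then yields $\ell(\mathcal{A})\le N-1$.

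Main obstacle. There is no deep step in this proof; it is essentially a structural reduction followed by an appeal to nilpotency. The only place that rewards careful attention is Step~1, where one must track the expansion of products of the $s_i$ carefully so that neither substitution $s_i\leftrightarrow s_i'+\lambda_i 1_{\mathcal{A}}$ secretly introduces words of length exceeding $k$; once $L_k(\mathcal{S})=L_k(\mathcal{S}')$ is in hand, the vanishing $J(\mathcal{A})^N=0$ does everything else.
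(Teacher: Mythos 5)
Your argument is correct. Note that the paper itself gives no proof of this statement---it is imported verbatim from Markova's survey as Corollary~3.32---so there is no in-paper argument to compare against; your reduction (replace each generator $s_i$ by $s_i'=s_i-\lambda_i 1_{\mathcal{A}}\in J(\mathcal{A})$, check $L_k(\mathcal{S})=L_k(\mathcal{S}')$ by expanding products without increasing word length, then kill everything of length $\ge N$ via $J(\mathcal{A})^N=0$) is exactly the standard proof of that corollary, and it also silently repairs the statement's typo by establishing the bound for every generating system and hence for $\ell(\mathcal{A})$. The only cosmetic remark is that your stabilization step is unnecessary: once $\mathcal{S}'\subseteq J(\mathcal{A})$, every word of length $j\ge N$ is literally zero, so $L_j(\mathcal{S}')=L_{N-1}(\mathcal{S}')$ for all $j\ge N-1$ directly.
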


\begin{prop}[{\cite[Proposition~4.12]{markova2013}}]
\label{mingtijida}
Let $m, n\in \mathbb{N}$, $k\in \mathbb{Z}^+$, and $k+m+1\le n$. Consider the subalgebra $\mathcal{B}_{k,m}\subseteq \mathrm{M}_n(\mathbb{F})$ generated by the matrices
$$\mathbb{E}_n, B=E_{m,m+1}+E_{m+1,m+2}+\cdots+E_{m+k,m+k+1}, E_{i,j},$$
where $1\le i\le m$, $m+k+1\le j\le n$. Then $\mathcal{B}_{k,m}$ is a maximal commutative subalgebra in $\mathrm{M}_n(\mathbb{F})$ of length $\ell\left(\mathcal{B}_{k,m}\right)=k+1$.
\end{prop}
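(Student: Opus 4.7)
The plan is to prove, in turn, commutativity of the generators, maximality of the resulting commutative subalgebra, and the equality $\ell(\mathcal{B}_{k,m}) = k+1$. Pairwise commutativity follows from direct computation with the rule $E_{p,q}E_{r,s} = \delta_{q,r}E_{p,s}$: for any generator $E_{i,j}$ with $1 \le i \le m$ and $m+k+1 \le j \le n$, both $B\,E_{i,j}$ and $E_{i,j}\,B$ vanish (the row indices $i \le m$ never match the column indices $m+1,\dots,m+k+1$ appearing in $B$, and $j \ge m+k+1$ strictly exceeds the row indices $m,\dots,m+k$ of $B$), and $E_{i,j}E_{i',j'} = 0$ because $j > m \ge i'$. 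A quick induction gives $B^{s} = \sum_{t=0}^{k+1-s} E_{m+t,\,m+t+s}$ for $1 \le s \le k+1$, so $B^{k+1} = E_{m,\,m+k+1}$ (which is itself one of the listed generators) and $B^{k+2}=0$.

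For maximality I would compute the centralizer $C_{\mathrm{M}_n(\mathbb{F})}(\mathcal{B}_{k,m})$ directly. Let $X = (x_{ij})$ commute with every generator. Commuting with each $E_{p,q}$ (where $p\in\{1,\dots,m\}$, $q\in\{m+k+1,\dots,n\}$) forces column $p$ of $X$ to equal $x_{pp}e_p$, row $q$ to equal $x_{qq}e_q^{T}$, and all such diagonal entries to share a common value $\lambda$. Commuting with $B$ then yields the Toeplitz-type relations $x_{i,j-1} = x_{i+1,j}$ for $i\in\{m,\dots,m+k\}$, $j\in\{m+1,\dots,m+k+1\}$, together with the vanishing of all adjacent entries that lie outside the top-right block $\{1,\dots,m\}\times\{m+k+1,\dots,n\}$. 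Combining the two sets of constraints, $X$ is a linear combination of $\mathbb{E}_n$, $B,\dots,B^{k+1}$ and the matrix units $E_{i,j}$ with $1\le i\le m$, $m+k+1\le j\le n$, hence $X\in\mathcal{B}_{k,m}$.

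For the length, every element of $\mathcal{B}_{k,m}$ decomposes as $\lambda\mathbb{E}_n + (\text{nilpotent})$, so $\mathcal{B}_{k,m}$ is local with $J(\mathcal{B}_{k,m})$ equal to the span of $\{B,\dots,B^{k+1}\}\cup\{E_{i,j}\}$ and $\mathcal{B}_{k,m}=\mathbb{F}\mathbb{E}_n + J(\mathcal{B}_{k,m})$. Because $B^{s}E_{i,j}=E_{i,j}B^{s}=0$ for $s\ge 1$ and $E_{i,j}E_{i',j'}=0$, one obtains $J(\mathcal{B}_{k,m})^{r} = \mathrm{span}\{B^{r},\dots,B^{k+1}\}$ for $1\le r\le k+1$, so the nilpotency index of $J(\mathcal{B}_{k,m})$ equals $N=k+2$ and Theorem~\ref{dingli2} gives $\ell(\mathcal{B}_{k,m})\le k+1$. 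For the matching lower bound I would use the generating system $\mathcal{S}' = \{\mathbb{E}_n,B\}\cup\{E_{i,j}:1\le i\le m,\ m+k+1\le j\le n,\ (i,j)\ne(m,m+k+1)\}$; since every product of length $\ge 2$ involving any $E_{i,j}$ vanishes, the element $B^{k+1}=E_{m,m+k+1}$ can be realized in $L(\mathcal{S}')$ only as the word $B\cdots B$ of length $k+1$, so $B^{k+1}\notin L_{k}(\mathcal{S}')$ and $\ell(\mathcal{S}') = k+1$.

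The main obstacle is the centralizer calculation, which requires patient bookkeeping of which entries of $X$ are constrained by each generator and the identification of the constrained middle block with $\mathrm{span}\{\mathbb{E}_n,B,\dots,B^{k+1}\}$. The length lower bound is conceptually easy but depends on the subtle point that $E_{m,m+k+1}$ must be dropped from the obvious generating set: kept as an explicit generator, it realizes $B^{k+1}$ at word length $1$, which would lower $\ell(\mathcal{S})$ to $k$ instead of the claimed $k+1$.
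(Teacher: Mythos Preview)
Your proposal is correct and follows essentially the same approach the paper uses (both in its proof of the analogous Theorems~\ref{dinglijida}--\ref{dinglijida1} for $\mathcal{B}_{k,m,l}$ and in the worked example for $\mathcal{B}_{2,1}$): verify the relations $BE_{i,j}=E_{i,j}B=E_{i,j}E_{i',j'}=0$ and compute $B^{s}$ explicitly, establish maximality by showing any $X$ in the centralizer is forced into $\mathrm{span}\{\mathbb{E}_n,B,\dots,B^{k+1},E_{i,j}\}$, bound $\ell(\mathcal{B}_{k,m})\le k+1$ via the nilpotency index $N=k+2$ and Theorem~\ref{dingli2}, and obtain the lower bound from the generating system with $E_{m,m+k+1}$ removed. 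Your observation that keeping $E_{m,m+k+1}$ in $\mathcal{S}$ would drop the length of that particular system to $k$ is exactly the subtlety the paper exploits in choosing $\mathcal{S}$ for Theorem~\ref{dinglijida1}.
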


\section{Results}
\label{S3}
\subsection{The Construction of a Class of Maximal Commutative Subalgebras $\mathcal{B}_{k,m,l}$}
\ \newline

In this subsection, inspired by Proposition \ref{mingtijida}, we will construct and prove a class of maximal commutative subalgebras.

\begin{theo}
\label{dinglijida}
Let $m, l\in \mathbb{N}$, $k\in \mathbb{Z}^+$, $l>m+k+1$, and $l+k+1\le n$. Consider the subalgebra $\mathcal{B}_{k,m,l}\subseteq \mathrm{M}_n(\mathbb{F})$ generated by the matrices
$$\mathbb{E}_n, B_1=E_{m,m+1}+E_{m+1,m+2}+\cdots+E_{m+k,m+k+1}, $$
$$B_2=E_{l,l+1}+E_{l+1,l+2}+\cdots+E_{l+k,l+k+1}, E_{i,j},$$
where $1\le i\le m$ or $i=l$, and $m+k+1\le j\le l-1$ or $l+k+1\le j\le n$. Then $\mathcal{B}_{k,m,l}$ is a maximal commutative subalgebra in $\mathrm{M}_n(\mathbb{F})$.
\end{theo}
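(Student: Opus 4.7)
The plan is to prove commutativity of $\mathcal{B}_{k,m,l}$ and then to establish the identity $C_{\mathrm{M}_n(\mathbb{F})}(\mathcal{B}_{k,m,l}) = \mathcal{B}_{k,m,l}$, which by the centralizer characterization of maximal commutativity recalled in Section~\ref{S2} is exactly what is needed. Throughout, write $R = \{1, \ldots, m\} \cup \{l\}$ and $C = \{m+k+1, \ldots, l-1\} \cup \{l+k+1, \ldots, n\}$ for the row and column index sets appearing in the matrix-unit generators.

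For commutativity of the generators, the hypothesis $l > m+k+1$ makes the two staircases $B_1$ and $B_2$ interact trivially: one verifies directly $B_1 B_2 = 0 = B_2 B_1$, since the column indices in the summands of $B_1$ never equal the row indices in the summands of $B_2$. For a generator $E_{i,j}$ with $(i,j) \in R \times C$, both $B_1 E_{i,j}$ and $E_{i,j} B_1$ vanish (because $m+s+1 \ne i$ for $i \in R$ and $s \in \{0, \ldots, k\}$, while $j \in C$ never coincides with $m+s$), and analogous calculations handle $B_2$. Finally, the product $E_{i,j} E_{i',j'}$ of any two matrix-unit generators vanishes since $j \in C$, $i' \in R$, and $R \cap C = \emptyset$.

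For the containment $C_{\mathrm{M}_n(\mathbb{F})}(\mathcal{B}_{k,m,l}) \subseteq \mathcal{B}_{k,m,l}$, take $X = (x_{pq})$ commuting with every generator. The relation $E_{i,j} X = X E_{i,j}$ for each $(i,j) \in R \times C$ yields three families of constraints on entries of $X$: $x_{ii} = x_{jj}$, a common value we call $\lambda$; column $i$ of $X$ is zero off the diagonal for each $i \in R$; and row $j$ of $X$ is zero off the diagonal for each $j \in C$. Commutation with $B_1$ yields, by a direct computation of both $X B_1$ and $B_1 X$ and comparing entries, three further families: (i) each column in $\{m, \ldots, m+k\}$ of $X$ vanishes outside rows $\{m, \ldots, m+k\}$; (ii) each row in $\{m+1, \ldots, m+k+1\}$ of $X$ vanishes outside columns $\{m+1, \ldots, m+k+1\}$; and (iii) the Toeplitz identity $x_{m+a, m+b} = x_{m+a+1, m+b+1}$ for $a, b \in \{0, \ldots, k\}$. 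Commuting with $B_2$ gives the symmetric conditions with $m$ replaced by $l$.

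Putting these constraints together determines $X$ completely. Within the $(k+2) \times (k+2)$ block of $X$ indexed by rows and columns $\{m, \ldots, m+k+1\}$, iterating (iii) along each diagonal and combining with (ii) forces every subdiagonal entry to vanish (propagating from $x_{m+a, m} = 0$ for $a \ge 1$), while combining with $x_{m,m} = \lambda$ fixes the entire main diagonal to $\lambda$. The remaining degrees of freedom in the block are precisely the coefficients of $B_1^1, \ldots, B_1^{k+1}$, so this block agrees with the corresponding block of $\lambda \mathbb{E}_n + \sum_{d=1}^{k+1} \alpha_d B_1^d$; analogous structure holds in the $B_2$-block. At any position $(p,q)$ outside the two Toeplitz blocks and the main diagonal, the row and column vanishing conditions together force $x_{pq} = 0$ unless $p \in R$ and $q \in C$, in which case the entry is free. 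Consequently $X$ is a linear combination of $\mathbb{E}_n$, the powers $B_1^d$ and $B_2^d$, and the matrix units $E_{i,j}$ with $(i,j) \in R \times C$, placing $X$ in $\mathcal{B}_{k,m,l}$. The main obstacle is this final bookkeeping: checking that the Toeplitz identities and the row/column vanishing conditions mesh exactly so that free parameters remain only at positions already available in the generating set, with no spurious extra entries arising elsewhere.
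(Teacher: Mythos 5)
Your proposal is correct and follows essentially the same route as the paper's proof: establish the vanishing products among the generators (using $l>m+k+1$ and the disjointness of the row set $R$ and column set $C$) to get commutativity, then compute the centralizer of the generating set entrywise — the $E_{i,j}$ relations giving the diagonal identification and row/column vanishing, the $B_1,B_2$ relations giving the Toeplitz blocks — and conclude that a centralizing matrix is a combination of $\mathbb{E}_n$, powers of $B_1,B_2$, and the $E_{i,j}$. The bookkeeping you flag as the main obstacle is exactly the content of the paper's Step 4 (its Eqs. (3.3)–(3.20)), and it does close up as you describe.
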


\begin{proof}
The following is the proof divided into four steps.

\textbf{Step 1}:~~The structure of the algebra $\mathcal{B}_{k,m,l}$ allows us to determine the relationships between the generator matrices. These will be presented individually below.

\begin{enumerate}[label=\circled{\arabic*}]
\item\label{1} Since $l>m+k+1$, then
$$B_1B_2=(E_{m,m+1}+E_{m+1,m+2}+\cdots+E_{m+k,m+k+1})(E_{l,l+1}+\cdots+E_{l+k,l+k+1})=0.$$
Under the condition that $l>m+k+1>m$, it follows that
$$B_2B_1=(E_{l,l+1}+\cdots+E_{l+k,l+k+1})(E_{m,m+1}+E_{m+1,m+2}+\cdots+E_{m+k,m+k+1})=0.$$

\item If $1\le i\le m$, then $i<m+1$ holds trivially. Furthermore, if $i=l$ and $l>m+k+1$, we immediately obtain $i>m+k+1$. Therefore, we obtain the matrix identity:
$$B_1E_{i,j}=(E_{m,m+1}+E_{m+1,m+2}+\cdots+E_{m+k,m+k+1})E_{i,j}=0.$$
Since either $m+k<m+k+1\le j\le l-1$ or $m+k+1<l+k+1\le j\le n$ holds, we conclude that
$$E_{i,j}B_1=E_{i,j}(E_{m,m+1}+E_{m+1,m+2}+\cdots+E_{m+k,m+k+1})=0.$$

\item If $1\le i\le m$ or $i=l$, then $i<l+1$. Thus, the following equation holds:
$$B_2E_{i,j}=(E_{l,l+1}+E_{l+1,l+2}+\cdots+E_{l+k,l+k+1})E_{i,j}=0.$$
Given that $j$ satisfies either $m+k+1\le j\le l-1$ or $l+k+1\le j\le n$, that is $j\notin \{l, l+1, \cdots, l+k\}$, it follows that
$$E_{i,j}B_2=E_{i,j}(E_{l,l+1}+E_{l+1,l+2}+\cdots+E_{l+k,l+k+1})=0.$$

\item For any matrices $E_{i,j}, E_{p,q}\in \mathcal{B}_{k,m,l}$, under the given conditions:
$$1\le p\le m~\text{or}~p=l,~\text{and}~m+k+1\le j\le l-1~\text{or}~l+k+1\le j\le n,$$
we observe that $p\neq j$ always holds. Thus, the matrix product satisfies $E_{i,j}E_{p,q}=0$.

\item\label{5} Owing to the special structure of matrices~$B_1$ and $B_2$, we obtain by induction:
\begin{gather*}
B_1^s=\sum_{h=0}^{k-s+1}E_{m+h,m+h+s},~\text{where}~s\in\{1, 2, \cdots, k+1\},\\
B_2^t=\sum_{h=0}^{k-t+1}E_{l+h,l+h+t},~\text{where}~t\in\{1, 2, \cdots, k+1\}.
\end{gather*}
However, when $s,t\ge k+2$, $B_1^s=B_2^t=0$.
\end{enumerate}

Based on the analysis in equations \ref{1} - \ref{5} above, we derive the following results:
\begin{equation}
\label{diyibu}
\left\{
\begin{aligned}
&B_1B_2=B_2B_1=0, B_1E_{i,j}=E_{i,j}B_1=B_2E_{i,j}=E_{i,j}B_2=0,  \\
&\text{For any}~E_{i,j}, E_{p,q} \in \mathcal{B}_{k,m,l},~\text{we have}~E_{i,j}E_{p,q}=0,   \\
&B_1^s=\sum_{h=0}^{k-s+1}E_{m+h,m+h+s},~\text{where}~s\in\{1, 2, \cdots, k+1\}, \\
&B_2^t=\sum_{h=0}^{k-t+1}E_{l+h,l+h+t},~\text{where}~t\in\{1, 2, \cdots, k+1\}.
\end{aligned}
\right.
\end{equation}

\textbf{Step 2}:~~The following verifies that $\mathcal{B}_{k,m,l}$ is a subalgebra of $\mathrm{M}_n(\mathbb{F})$. Let us denote the set
$$M=\{j\in\mathbb{Z}^+\vert m+k+1\le j\le l-1~or~l+k+1\le j\le n\},$$
and $W=\{i\in\mathbb{Z}^+\vert 1\le i\le m~or~i=l\}$.

For any matrices $A_1, A_2\in \mathcal{B}_{k,m,l}$, based on the computational results from Step 1 Eq. (\ref{diyibu}) and the structure of $\mathcal{B}_{k,m,l}$, we make the following assumption:
\begin{equation*}
\begin{aligned}
&A_1=\gamma_1E_n+\sum_{s_1=1}^{k+1}\alpha_{1,s_1}B_1^{s_1}+\sum_{t_1=1}^{k+1}\lambda_{1,t_1}B_2^{t_1}+\sum_{i\in W}\sum_{j\in M}\mu_{1,i,j}E_{i,j},\\
&A_2=\gamma_2E_n+\sum_{s_2=1}^{k+1}\alpha_{2,s_2}B_1^{s_2}+\sum_{t_2=1}^{k+1}\lambda_{2,t_2}B_2^{t_2}+\sum_{i\in W}\sum_{j\in M}\mu_{2,i,j}E_{i,j},
\end{aligned}
\end{equation*}
where $\gamma_1, \gamma_2, \alpha_{1,s_1}, \alpha_{2,s_2}, \lambda_{1,t_1}, \lambda_{2,t_2}, \mu_{1,i,j}, \mu_{2,i,j} \in \mathbb{F}$.

Thus,
\begin{equation}
\label{dierbu}
\begin{aligned}
&\left(A_1-\gamma_1E_n\right)\left(A_2-\gamma_2E_n\right)\\
=&\left(\sum_{s_1=1}^{k+1}\alpha_{1,s_1}B_1^{s_1}+\sum_{t_1=1}^{k+1}\lambda_{1,t_1}B_2^{t_1}+\sum_{i\in W}\sum_{j\in M}\mu_{1,i,j}E_{i,j}\right)\\
&\left(\sum_{s_2=1}^{k+1}\alpha_{2,s_2}B_1^{s_2}+\sum_{t_2=1}^{k+1}\lambda_{2,t_2}B_2^{t_2}+\sum_{i\in W}\sum_{j\in M}\mu_{2,i,j}E_{i,j}\right)\\
=&\sum_{s_1=1}^{k+1}\sum_{s_2=1}^{k+1}\alpha_{1,s_1}\alpha_{2,s_2}B_1^{s_1+s_2}+\sum_{t_1=1}^{k+1}\sum_{t_2=1}^{k+1}\lambda_{1,t_1}\lambda_{2,t_2}B_2^{t_1+t_2}.
\end{aligned}
\end{equation}
Then, the above expression can be rewritten as
\begin{equation*}
A_1A_2=\gamma_2A_1+\gamma_1A_2-\gamma_1\gamma_2E_n+\sum_{s_1=1}^{k+1}\sum_{s_2=1}^{k+1}\alpha_{1,s_1}\alpha_{2,s_2}B_1^{s_1+s_2}+\sum_{t_1=1}^{k+1}\sum_{t_2=1}^{k+1}\lambda_{1,t_1}\lambda_{2,t_2}B_2^{t_1+t_2}.
\end{equation*}
It is straightforward to observe that $A_1A_2\in \mathcal{B}_{k,m,l}$, and $\mathcal{B}_{k,m,l}$ is closed under standard matrix addition. Therefore, $\mathcal{B}_{k,m,l}$ is a subalgebra of $\mathrm{M}_n(\mathbb{F})$.

\textbf{Step 3}:~~The following proves that $\mathcal{B}_{k,m,l}$ is a commutative subalgebra.

According to Eq. (\ref{dierbu}) in Step 2, it can be concluded that
\begin{equation*}
\left(A_1-\gamma_1E_n\right)\left(A_2-\gamma_2E_n\right)=\sum_{s_1=1}^{k+1}\sum_{s_2=1}^{k+1}\alpha_{1,s_1}\alpha_{2,s_2}B_1^{s_1+s_2}+\sum_{t_1=1}^{k+1}\sum_{t_2=1}^{k+1}\lambda_{1,t_1}\lambda_{2,t_2}B_2^{t_1+t_2}.
\end{equation*}
And
\begin{equation*}
\begin{aligned}
&\left(A_2-\gamma_2E_n\right)\left(A_1-\gamma_1E_n\right)\\
=&\left(\sum_{s_2=1}^{k+1}\alpha_{2,s_2}B_1^{s_2}+\sum_{t_2=1}^{k+1}\lambda_{2,t_2}B_2^{t_2}+\sum_{i\in N}\sum_{j\in M}\mu_{2,i,j}E_{i,j}\right)\\
&\left(\sum_{s_1=1}^{k+1}\alpha_{1,s_1}B_1^{s_1}+\sum_{t_1=1}^{k+1}\lambda_{1,t_1}B_2^{t_1}+\sum_{i\in N}\sum_{j\in M}\mu_{1,i,j}E_{i,j}\right)\\
=&\sum_{s_2=1}^{k+1}\sum_{s_1=1}^{k+1}\alpha_{2,s_2}\alpha_{1,s_1}B_1^{s_1+s_2}+\sum_{t_2=1}^{k+1}\sum_{t_1=1}^{k+1}\lambda_{2,t_2}\lambda_{1,t_1}B_2^{t_1+t_2}.
\end{aligned}
\end{equation*}
Therefore, we have
$$\left(A_1-\gamma_1E_n\right)\left(A_2-\gamma_2E_n\right)=\left(A_2-\gamma_2E_n\right)\left(A_1-\gamma_1E_n\right),$$ 
which implies $A_1A_2=A_2A_1$. Thus, the subalgebra $\mathcal{B}_{k,m,l}$ is commutative.

\textbf{Step 4}:~~Verification $\mathcal{B}_{k,m,l}$ is a maximal commutative subalgebra.

For any matrix $A=\left(a_{i,j}\right)_{n\times n}\in \mathrm{M}_n\left(\mathbb{F}\right)$, the matrix $A$ commutes with every matrix in $\mathcal{B}_{k,m,l}$, which means $A$ commutes with any generator matrices of the subalgebra $\mathcal{B}_{k,m,l}$.

\begin{enumerate}[label=\circled{\arabic*}]
\item $A$ commutes with $E_{i,j}$, where $i\in W, j\in M$. Thus
\begin{equation*}
\begin{aligned}
&AE_{i,j}=\left(\sum_{k=1}^{n}\sum_{s=1}^{n}a_{k,s}E_{k,s}\right)E_{i,j}\xlongequal{s=i}\sum_{k=1}^{n}a_{k,i}E_{k,i}E_{i,j}=\sum_{k=1}^{n}a_{k,i}E_{k,j},\\
&E_{i,j}A=E_{i,j}\left(\sum_{k=1}^{n}\sum_{s=1}^{n}a_{k,s}E_{k,s}\right)\xlongequal{k=j}\sum_{s=1}^{n}a_{j,s}E_{i,j}E_{j,s}=\sum_{s=1}^{n}a_{j,s}E_{i,s}.
\end{aligned}
\end{equation*}
Therefore, for any $i\in W, j\in M$, we have
$$\sum_{k=1}^{n}a_{k,i}E_{k,j}=\sum_{s=1}^{n}a_{j,s}E_{i,s},$$
which implies that
\begin{equation*}
a_{1,i}E_{1,j}+a_{2,i}E_{2,j}+\cdots+a_{n,i}E_{n,j}=a_{j,1}E_{i,1}+a_{j,2}E_{i,2}+\cdots+a_{j,n}E_{i,n}.
\end{equation*}
Since $i\in W$, it follows that the coefficients of $E_{m+1,j}, \cdots, E_{l-1,j}, E_{l+1,j},\cdots,E_{n,j}$ are $0$, i.e.,
\begin{equation}
\label{gs1}
a_{m+1,i}=a_{m+2,i}=\cdots=a_{l-1,i}=a_{l+1,i}=\cdots=a_{n,i}=0.
\end{equation}
Moreover, since $j\in M$, the coefficients before $E_{i,1}, \cdots, E_{i,k+m}$ and $E_{i,l}, \cdots, E_{i,l+k}$ are $0$, that is,
\begin{equation}
\label{gs2}
a_{j,1}=a_{j,2}=\cdots=a_{j,k+m}=0~\text{and}~a_{j,l}=\cdots=a_{j,l+k}=0.
\end{equation}
Therefore,
\begin{equation*}
\begin{aligned}
&a_{1,i}E_{1,j}+a_{2,i}E_{2,j}+\cdots+a_{m,i}E_{m,j}+a_{l,i}E_{l,j}\\
=&a_{j,k+m+1}E_{i,k+m+1}+\cdots+a_{j,l-1}E_{i,l-1}+a_{j,l+k+1}E_{i,l+k+1}+\cdots+a_{j,n}E_{i,n}.
\end{aligned}
\end{equation*}
By comparing the coefficients of both sides of the above-mentioned equation and combining the results of Eqs. (\ref{gs1}) and (\ref{gs2}), it is easy to obtain that for any $i\in W$ and $j\in M$, every pair $(i,j)$ satisfies
\begin{align}
\label{gs3}
&a_{i,i}=a_{j,j},\\
\begin{split}
\label{gs4}
&a_{1,i}=a_{2,i}=\cdots=a_{i-1,i}=a_{i+1,i}=\cdots=a_{m,i}=\cdots=a_{n,i}=0,\\
&a_{j,1}=a_{j,2}=\cdots=a_{j,j-1}=a_{j,j+1}=\cdots=a_{j,n}=0.
\end{split}
\end{align}

\item $A$ commutes with $B_1$. A direct calculation shows that
\begin{equation*}
\begin{aligned}
AB_1&=\left(\sum_{s=1}^{n}\sum_{t=1}^{n}a_{s,t}E_{s,t}\right)\left(E_{m,m+1}+E_{m+1,m+2}+\cdots+E_{m+k,m+k+1}\right)\\
&=\sum_{s=1}^{n}\left(a_{s,m}E_{s,m+1}+a_{s,m+1}E_{s,m+2}+\cdots+a_{s,m+k}E_{s,m+k+1}\right),\\
B_1A&=\left(E_{m,m+1}+E_{m+1,m+2}+\cdots+E_{m+k,m+k+1}\right)\left(\sum_{s=1}^{n}\sum_{t=1}^{n}a_{s,t}E_{s,t}\right)\\
&=\sum_{t=1}^{n}\left(a_{m+1,t}E_{m,t}+a_{m+2,t}E_{m+1,t}+\cdots+a_{m+k+1,t}E_{m+k,t}\right).
\end{aligned}
\end{equation*}
Since $AB_1=B_1A$, it can be observed that
\begin{equation}
\label{gs5}
\begin{aligned}
&\sum_{s=m}^{m+k}\left(a_{s,m}E_{s,m+1}+a_{s,m+1}E_{s,m+2}+\cdots+a_{s,m+k}E_{s,m+k+1}\right)\\
=&\sum_{t=m+1}^{m+k+1}\left(a_{m+1,t}E_{m,t}+a_{m+2,t}E_{m+1,t}+\cdots+a_{m+k+1,t}E_{m+k,t}\right).
\end{aligned}
\end{equation}
and the following equality holds:
\begin{equation}
\label{gs6}
\left\{
\begin{aligned}
&a_{p,m}=\cdots=a_{p,m+k}=0,~\qquad p\in \{1,2,\cdots, m-1,m+k+1,\cdots,n\},\\
&a_{m+1,q}=\cdots=a_{m+k+1,q}=0,~q\in \{1,2,\cdots, m,m+k+2,\cdots,n\}.\\
\end{aligned}
\right.
\end{equation}
For every $t\in\{m+1,\cdots,m+k+1\}$, Eqs. (\ref{gs5}) and (\ref{gs6}) jointly imply the identity:
\begin{equation}
\label{gs7}
\begin{aligned}
\sum_{s=m}^{m+k}a_{s,t-1}E_{s,t}&=a_{m+1,t}E_{m,t}+a_{m+2,t}E_{m+1,t}+\cdots+a_{m+k+1,t}E_{m+k,t}\\
&=a_{m,t-1}E_{m,t}+a_{m+1,t-1}E_{m+1,t}+\cdots+a_{m+k,t-1}E_{m+k,t}.
\end{aligned}
\end{equation}
By comparing the coefficients on both sides of the above equation, we can further derive that
$$a_{m+r,t-1}=a_{m+r+1,t},$$ 
where $r\in\{0,1,\cdots,k\}$ and $t\in\{m+1,\cdots,m+k+1\}$.
Hence, a recursive application of the preceding relation yields
\begin{align}
\label{gs8}
&a_{m,m}=a_{m+1,m+1}=\cdots=a_{m+k+1,m+k+1},\\
\begin{split}
\label{gs9}
&a_{m,m+1}=a_{m+1,m+2}=\cdots=a_{m+k,m+k+1},\\
&a_{m,m+2}=a_{m+1,m+3}=\cdots=a_{m+k-1,m+k+1},\\
&\hspace{8.65em}\vdots\hspace{8.65em}\\
&a_{m,m+k}=a_{m+1,m+k+1}.
\end{split}
\end{align}
Substituting Eqs. (\ref{gs8}) and~(\ref{gs9})~into Eq. (\ref{gs7}), we obtain
\begin{equation}
\label{gs10}
\left\{
\begin{aligned}
&a_{m+1,m}=a_{m+2,m+1}=\cdots=a_{m+k+1,m+k}=0,\\
&\hspace{8.65em}\vdots\hspace{8.65em}\\
&a_{m+k,m}=a_{m+k+1,m+1}=0.\\
\end{aligned}
\right.
\end{equation}

\item $A$ commutes with $B_2$, direct calculation shows
\begin{equation*}
\begin{aligned}
AB_2&=\left(\sum_{s=1}^{n}\sum_{t=1}^{n}a_{s,t}E_{s,t}\right)\left(E_{l,l+1}+E_{l+1,l+2}+\cdots+E_{l+k,l+k+1}\right)\\
&=\sum_{s=1}^{n}\left(a_{s,l}E_{s,l+1}+a_{s,l+1}E_{s,l+2}+\cdots+a_{s,l+k}E_{s,l+k+1}\right),\\
B_2A&=\left(E_{l,l+1}+E_{l+1,l+2}+\cdots+E_{l+k,l+k+1}\right)\left(\sum_{s=1}^{n}\sum_{t=1}^{n}a_{s,t}E_{s,t}\right)\\
&=\sum_{t=1}^{n}\left(a_{l+1,t}E_{l,t}+a_{l+2,t}E_{l+1,t}+\cdots+a_{l+k+1,t}E_{l+k,t}\right).
\end{aligned}
\end{equation*}
From $AB_2=B_2A$, it immediately follows that
\begin{equation}
\label{gs11}
\begin{aligned}
&\sum_{s=l}^{l+k}\left(a_{s,l}E_{s,l+1}+a_{s,l+1}E_{s,l+2}+\cdots+a_{s,l+k}E_{s,l+k+1}\right)\\
=&\sum_{t=l+1}^{l+k+1}\left(a_{l+1,t}E_{l,t}+a_{l+2,t}E_{l+1,t}+\cdots+a_{l+k+1,t}E_{l+k,t}\right).
\end{aligned}
\end{equation}
and the following equality holds:
\begin{equation}
\label{gs12}
\left\{
\begin{aligned}
&a_{p,l}=\cdots=a_{p,l+k}=0,~~~\qquad p\in \{1,2,\cdots, l-1,l+k+1,\cdots,n\},\\
&a_{l+1,q}=\cdots=a_{l+k+1,q}=0,~ q\in \{1,2,\cdots, l,l+k+2,\cdots,n\}.\\
\end{aligned}
\right.
\end{equation}
Substituting the above result into Eq. (\ref{gs11}), we find that for each $t\in\{l+1,\cdots,l+k+1\}$:
\begin{equation}
\label{gs13}
\begin{aligned}
\sum_{s=l}^{l+k}a_{s,t-1}E_{s,t}&=a_{l+1,t}E_{l,t}+a_{l+2,t}E_{l+1,t}+\cdots+a_{l+k+1,t}E_{l+k,t}\\
&=a_{l,t-1}E_{l,t}+a_{l+1,t-1}E_{l+1,t}+\cdots+a_{l+k,t-1}E_{l+k,t}.
\end{aligned}
\end{equation}
By comparing coefficients on both sides of the above equation, we further obtain the recurrence relation: 
$$a_{l+r,t-1}=a_{l+r+1,t},$$ 
where $r\in\{0,1,\cdots,k\}$ and $t\in\{l+1,\cdots,l+k+1\}$. We therefore establish by induction that
\begin{align}
\label{gs14}
&a_{l,l}=a_{l+1,l+1}=\cdots=a_{l+k+1,l+k+1},\\
\begin{split}
\label{gs15}
&a_{l,l+1}=a_{l+1,l+2}=\cdots=a_{l+k,l+k+1},\\
&a_{l,l+2}=a_{l+1,l+3}=\cdots=a_{l+k-1,l+k+1},\\
&\hspace{8.65em}\vdots\hspace{8.65em}\\
&a_{l,l+k}=a_{l+1,l+k+1}.
\end{split}
\end{align}
Furthermore, according to Eq. (\ref{gs12}), we deduce that
\begin{equation}
\label{gs16}
\left\{
\begin{aligned}
&a_{l+1,l}=a_{l+2,l+1}=\cdots=a_{l+k+1,l+k}=0,\\
&a_{l+2,l}=a_{l+3,l+1}=\cdots=a_{l+k+1,l+k-1}=0,\\
&\hspace{8.65em}\vdots\hspace{8.65em}\\
&a_{l+k,l}=a_{l+k+1,l+1}=0.\\
\end{aligned}
\right.
\end{equation}

To summarize the results obtained thus far, from Eqs. (\ref{gs3}), (\ref{gs8}) and (\ref{gs14}), we derive the following identity:
\begin{equation}
\label{gs17}
a_{1,1}=a_{2,2}=\cdots=a_{n,n}.
\end{equation}
By Eqs. (\ref{gs9}) and (\ref{gs15}),
\begin{equation}
\label{gs18}
\left\{
\begin{aligned}
&a_{m,m+1}=a_{m+1,m+2}=\cdots=a_{m+k,m+k+1},\\
&a_{m,m+2}=a_{m+1,m+3}=\cdots=a_{m+k-1,m+k+1},\\
&\hspace{8.65em}\vdots\hspace{8.65em}\\
&a_{m,m+k}=a_{m+1,m+k+1},\\
&a_{l,l}=a_{l+1,l+1}=\cdots=a_{l+k+1,l+k+1},\\
&a_{l,l+1}=a_{l+1,l+2}=\cdots=a_{l+k,l+k+1},\\
&a_{l,l+2}=a_{l+1,l+3}=\cdots=a_{l+k-1,l+k+1},\\
&\hspace{8.65em}\vdots\hspace{8.65em}\\
&a_{l,l+k}=a_{l+1,l+k+1}.\\
\end{aligned}
\right.
\end{equation}
Also, based on Eqs. (\ref {gs10}) and (\ref {gs16}), it can be deduced that
\begin{equation}
\label{gs19}
\left\{
\begin{aligned}
&a_{m+1,m}=a_{m+2,m+1}=\cdots=a_{m+k+1,m+k}=0,\\
&\hspace{8.65em}\vdots\hspace{8.65em}\\
&a_{m+k,m}=a_{m+k+1,m+1}=0,\\
&a_{l+1,l}=a_{l+2,l+1}=\cdots=a_{l+k+1,l+k}=0,\\
&a_{l+2,l}=a_{l+3,l+1}=\cdots=a_{l+k+1,l+k-1}=0,\\
&\hspace{8.65em}\vdots\hspace{8.65em}\\
&a_{l+k,l}=a_{l+k+1,l+1}=0.\\
\end{aligned}
\right.
\end{equation}
Finally, from Eqs. (\ref{gs4}), (\ref{gs6}) and (\ref{gs12}), we can obtain that
\begin{equation}
\label{gs20}
\left\{
\begin{aligned}
&a_{1,i}=\cdots=a_{m,i}=\cdots=a_{l-1,i}=a_{l,i}=\cdots=a_{n,i}=0,\\
&a_{j,1}=\cdots=a_{j,k+m}=\cdots=a_{j,j-1}=a_{j,j+1}=\cdots=a_{j,n}=0,\\
&a_{p_1,m}=\cdots=a_{p_1,m+k}=0,\quad\quad\quad p_1\in \{1,2,\cdots, m-1,m+k+1,\cdots,n\},\\
&a_{p_2,l}=\cdots=a_{p_2,l+k}=0,\qquad \quad \quad p_2\in \{1,2,\cdots, l-1,l+k+1,\cdots,n\},\\
&a_{m+1,q_1}=\cdots=a_{m+k+1,q_1}=0,~\quad q_1\in \{1,2,\cdots, m,m+k+2,\cdots,n\},\\
&a_{l+1,q_2}=\cdots=a_{l+k+1,q_2}=0,~\qquad q_2\in \{1,2,\cdots, l,l+k+2,\cdots,n\},\\
\end{aligned}
\right.
\end{equation}
where the parameter $i$ and $j$ range over $i\in W$ and $j\in M$, respectively.

To conclude, the general form of matrix $A$ is derived from Eqs. (\ref{gs17}) - (\ref{gs20}) as
\begin{equation*}
A=\lambda E_n+\sum_{s=1}^{k+1}\alpha_s B_1^{s}+\sum_{t=1}^{k+1}\beta_t B_2^{t}+\sum_{i\in W}\sum_{j\in M}\mu_{i,j} E_{i,j}.
\end{equation*}
Obviously, $A\in \mathcal{B}_{k,m,l}$. Therefore, $\mathcal{B}_{k,m,l}$ is a maximal commutative subalgebra of the full matrix algebra $\mathrm{M}_n(\mathbb{F})$.
\end{enumerate}
\end{proof}

\subsection{The length of the maximal commutative subalgebra $\mathcal{B}_{k,m,l}$}
\ \newline

In this section, we calculate the length of the maximal commutative subalgebra $\mathcal{B}_{k,m,l}$ and provide examples of the algebra $\mathcal{B}_{k,m,l}$ and the maximal commutative subalgebra $\mathcal{B}_{k,m}$ in Proposition \ref{mingtijida} respectively for comparison. The main results of this section are presented below.

\begin{theo}
\label{dinglijida1}
Let $m, l\in \mathbb{N}$, $k\in \mathbb{Z}^+$, $l>m+k+1$, and $l+k+1\le n$. Consider the subalgebra $\mathcal{B}_{k,m,l}\subseteq \mathrm{M}_n(\mathbb{F})$ generated by the matrices
$$\mathbb{E}_n, B_1=E_{m,m+1}+E_{m+1,m+2}+\cdots+E_{m+k,m+k+1}, $$
$$B_2=E_{l,l+1}+E_{l+1,l+2}+\cdots+E_{l+k,l+k+1}, E_{i,j},$$
where $1\le i\le m$ or $i=l$, and $m+k+1\le j\le l-1$ or $l+k+1\le j\le n$. Then $\ell\left(\mathcal{B}_{k,m,l}\right)=k+1.$
\end{theo}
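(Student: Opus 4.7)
The plan is to use Theorem~\ref{dingli2} for the upper bound $\ell(\mathcal{B}_{k,m,l}) \le k+1$ and then to exhibit an explicit generating system of length $k+1$ for the matching lower bound.

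First I would check that $\mathcal{B}_{k,m,l}$ is a local algebra and compute the nilpotency index of its Jacobson radical. By Theorem~\ref{dinglijida}, an arbitrary element can be written as $A = \lambda E_n + X + Y + Z$, where
$$X = \sum_{s=1}^{k+1}\alpha_s B_1^s, \qquad Y = \sum_{t=1}^{k+1}\beta_t B_2^t, \qquad Z = \sum_{i\in W,\,j\in M} \mu_{i,j} E_{i,j}.$$
The vanishing relations (\ref{diyibu}) give $XY = YX = XZ = ZX = YZ = ZY = Z^2 = 0$, so a straightforward induction yields $(X+Y+Z)^n = X^n + Y^n$ for every $n \ge 2$. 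Since $B_1^{k+2} = B_2^{k+2} = 0$ by Step~1, one gets $(X+Y+Z)^{k+2} = 0$; hence $A$ is invertible iff $\lambda \ne 0$, the non-invertible elements form a proper two-sided ideal $J$, the quotient $\mathcal{B}_{k,m,l}/J$ is the field $\mathbb{F}$, and the algebra is local with $\mathcal{B}_{k,m,l} = \mathbb{F} E_n + J$. Because $J$ contains $B_1^{k+1} \ne 0$ but the same computation shows $J^{k+2} = 0$, the nilpotency index is exactly $N = k+2$. Theorem~\ref{dingli2} then implies $\ell(\mathcal{S}) \le N - 1 = k+1$ for every generating system $\mathcal{S}$, so $\ell(\mathcal{B}_{k,m,l}) \le k+1$.

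For the matching lower bound I would test the generating system
$$\mathcal{S}_0 = \{E_n, B_1, B_2\} \cup \bigl(\{E_{i,j} : i\in W,\, j\in M\} \setminus \{E_{m,m+k+1},\, E_{l,l+k+1}\}\bigr).$$
This $\mathcal{S}_0$ still generates $\mathcal{B}_{k,m,l}$, since by Step~1 of Theorem~\ref{dinglijida} the two deleted matrix units are recovered as $B_1^{k+1} = E_{m,m+k+1}$ and $B_2^{k+1} = E_{l,l+k+1}$. Using (\ref{diyibu}) again, any product of two or more generators drawn from $\mathcal{S}_0\setminus\{E_n\}$ is either $B_1^r$, $B_2^r$, or zero, so $L_k(\mathcal{S}_0)$ is the $\mathbb{F}$-span of $E_n$, the powers $B_1^s$ and $B_2^t$ for $1 \le s, t \le k$, and the matrix units in $\mathcal{S}_0$. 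The formula $B_1^s = \sum_{h=0}^{k-s+1} E_{m+h,m+h+s}$ from Step~1 shows that $E_{m,m+k+1}$ never appears in $B_1^s$ when $s \le k$, so $E_{m,m+k+1} \notin L_k(\mathcal{S}_0)$; on the other hand $B_1^{k+1} = E_{m,m+k+1} \in L_{k+1}(\mathcal{S}_0)$. Hence $\ell(\mathcal{S}_0) = k+1$, giving $\ell(\mathcal{B}_{k,m,l}) \ge k+1$ and the desired equality.

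The essential technical input throughout is the list of vanishing products (\ref{diyibu}): it simultaneously drives the nilpotency calculation (all cross-type products die, leaving only the isolated $B_1^s$ and $B_2^t$ towers) and the identification of $L_k(\mathcal{S}_0)$ (the only non-zero length-$\ge 2$ words in $\mathcal{S}_0$ are pure powers of $B_1$ or $B_2$). The main obstacle is thus making sure that no shorter combination of generators can reproduce the basis vector $E_{m,m+k+1}$, which, once the relations in (\ref{diyibu}) are in hand, reduces to a direct inspection of the closed form for $B_1^s$ with $s \le k$.
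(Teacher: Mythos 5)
Your proposal is correct and takes essentially the same route as the paper: the upper bound via Theorem~\ref{dingli2} with nilpotency index $N=k+2$, and the lower bound via the generating system obtained by deleting $E_{m,m+k+1}$ and $E_{l,l+k+1}$ and observing that $B_1^{k+1}=E_{m,m+k+1}$ first appears in $L_{k+1}$. Your verification of locality and of why $E_{m,m+k+1}\notin L_k(\mathcal{S}_0)$ is somewhat more explicit than the paper's, but the argument is the same.
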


\begin{proof}
According to Step 1 in the proof of Theorem \ref{dinglijida}, it is easily seen that the algebra $\mathcal{B}_{k,m,l}$ has the nilpotency index of the radical $N=k+2$. Theorem \ref{dingli2} implies that
$$\ell\left(\mathcal{B}_{k,m,l}\right)\le N-1=k+1.$$

It remains to prove that $\ell\left(\mathcal{B}_{k,m,l}\right)\ge k+1$. We consider the following generating system for $\mathcal{B}_{k,m,l}$:
\begin{equation*}
\mathcal{S}=\{B_1, B_2, E_{i,j}\vert i\in W, j\in M, (i,j)\neq (m,m+k+1), (i,j)\neq (l,l+k+1)\}.
\end{equation*}
Note that the following equation holds:
\begin{equation*}
\begin{aligned}
&B_1B_2=B_2B_1=0, \\
&B_vE_{i,j}=E_{i,j}B_v=0, ~\text{where}~v=1,2, \\
&\text{For any}~E_{i,j}, E_{p,q} \in \mathcal{S},~\text{we have that}~E_{i,j}E_{p,q}=0.   \\
\end{aligned}
\end{equation*}
By construction, computing directly gives
\begin{equation*}
\begin{aligned}
&B_1^s=\sum_{h=0}^{k-s+1}E_{m+h,m+h+s}\in L_s(S),~\text{where}~s\in\{1, 2, \cdots, k+1\}, \\
&B_2^t=\sum_{h=0}^{k-t+1}E_{l+h,l+h+t}\in L_t(S),~\text{where}~t\in\{1, 2, \cdots, k+1\}.\\
\end{aligned}
\end{equation*}
and we also have that $B_1^s\notin L_{s-1}(S)~\text{and}~B_2^t\notin L_{t-1}(S)$ for any $s, t\in \{2,\cdots,k+1\}$. Thus, $\ell\left(\mathcal{B}_{k,m,l}\right)\ge k+1$. 

Therefore, $\ell\left(\mathcal{B}_{k,m,l}\right)= k+1$.
\end{proof}

Let us consider the following explicit example:

\begin{example}
\label{li1}
When $n=8$, $m=1$, $l=5$, $k=2$, we consider the subalgebra $\mathcal{B}_{2,1,5}\subseteq \mathrm{M}_8(\mathbb{F})$ generated by the following matrices
$$\mathbb{E}_8, B_1=E_{1,2}+E_{2,3}+E_{3,4}, B_2=E_{5,6}+E_{6,7}+E_{7,8}, E_{i,j},$$
where $i=1$ or $i=5$, and $j=4$ or $j=8$. Then $\mathcal{B}_{2,1,5}$ is a maximal commutative subalgebra in $\mathrm{M}_8(\mathbb{F})$ of length $\ell\left(\mathcal{B}_{2,1,5}\right)=3$.
\end{example}

\begin{proof}
The commutativity and maximality of algebra $\mathcal{B}_{2,1,5}$ have been established in Theorem \ref{dinglijida}. Below, we provide an intuitive proof of its maximality.

By definition \ref{dingyijida}, every matrix $A=\left(a_{ij}\right)_{8\times 8}\in \mathrm{M}_8\left(\mathbb{F}\right)$ commutes with all matrices in algebra $\mathcal{B}_{2,1,5}$, that is, $A$ commutes with any generator matrices of $\mathcal{B}_{2,1,5}$. The matrix $A$ can be partitioned in the following form:
$$
	A=
	\left(
	\begin{array}{c:c}
	A_1 & A_2 \\ \hdashline
	A_3 & A_4
	\end{array}
	\right),
$$
where
\begin{equation*}
\textbf{$A_1$}=
\begin{pmatrix}
a_{11} & a_{12} & a_{13} & a_{14} \\
a_{21} & a_{22} & a_{23} & a_{24} \\
a_{31} & a_{32} & a_{33} & a_{34} \\
a_{41} & a_{42} & a_{43} & a_{44} \\
\end{pmatrix},\quad
\textbf{$A_2$}=
\begin{pmatrix}
a_{15} & a_{16} & a_{17} & a_{18} \\
a_{25} & a_{26} & a_{27} & a_{28} \\
a_{35} & a_{36} & a_{37} & a_{38} \\
a_{45} & a_{46} & a_{47} & a_{48} \\
\end{pmatrix},
\end{equation*}
\begin{equation*}
\textbf{$A_3$}=
\begin{pmatrix}
a_{51} & a_{52} & a_{53} & a_{54} \\
a_{61} & a_{62} & a_{63} & a_{64} \\
a_{71} & a_{72} & a_{73} & a_{74} \\
a_{81} & a_{82} & a_{83} & a_{84} \\
\end{pmatrix},\quad
\textbf{$A_4$}=
\begin{pmatrix}
a_{55} & a_{56} & a_{57} & a_{58} \\
a_{65} & a_{26} & a_{67} & a_{68} \\
a_{75} & a_{36} & a_{77} & a_{78} \\
a_{85} & a_{46} & a_{87} & a_{88} \\
\end{pmatrix}.
\end{equation*}

\begin{itemize}
\item If $A$ commutes with $B_1=E_{1,2}+E_{2,3}+E_{3,4}$, regarding the matrix $B_1$ as
$$
    B=
	\left(
	\begin{array}{c:c}
	B_1& \text{\large 0}_4 \\ \hdashline
	\text{\large 0}_4& \text{\large 0}_4
	\end{array}
	\right),
$$
then $AB=BA$, it implies that 
$$
	\left(
	\begin{array}{c:c}
	A_1& A_2 \\ \hdashline
	A_3& A_4
	\end{array}
	\right)
\left(
	\begin{array}{c:c}
	B_1& \text{\large 0}_4 \\ \hdashline
	\text{\large 0}_4& \text{\large 0}_4
	\end{array}
	\right)
=
\left(
	\begin{array}{c:c}
	B_1& \text{\large 0}_4 \\ \hdashline
	\text{\large 0}_4& \text{\large 0}_4
	\end{array}
	\right)
\left(
	\begin{array}{c:c}
	A_1& A_2 \\ \hdashline
	A_3& A_4
	\end{array}
	\right).
$$
Hence,
$$
	\left(
	\begin{array}{c:c}
	A_1B_1& \text{\large 0}_4 \\ \hdashline
	A_3B_1& \text{\large 0}_4
	\end{array}
	\right)
=
\left(
	\begin{array}{c:c}
	B_1A_1& B_1A_2 \\ \hdashline
	\text{\large 0}_4& \text{\large 0}_4
	\end{array}
	\right).
$$
Observing the above matrix equality, we obtain $A_1B_1=B_1A_1$, $B_1A_2=A_3B_1=\text{\large 0}_4$. Consequently,

\begin{enumerate}[label=\circled{\arabic*}]
\item\label{xuhao1} Since $A_1B_1=B_1A_1$,
\begin{equation*}
\textbf{$A_1B_1$}=
\begin{pmatrix}
a_{11} & a_{12} & a_{13} & a_{14} \\
a_{21} & a_{22} & a_{23} & a_{24} \\
a_{31} & a_{32} & a_{33} & a_{34} \\
a_{41} & a_{42} & a_{43} & a_{44} \\
\end{pmatrix}
\begin{pmatrix}
0 & 1 & 0 & 0 \\
0 & 0 & 1 & 0 \\
0 & 0 & 0 & 1 \\
0 & 0 & 0 & 0 \\
\end{pmatrix}
=
\begin{pmatrix}
0&a_{11} & a_{12} & a_{13}  \\
0&a_{21} & a_{22} & a_{23}  \\
0&a_{31} & a_{32} & a_{33}  \\
0&a_{41} & a_{42} & a_{43}  \\
\end{pmatrix},
\end{equation*}
\begin{equation*}
\textbf{$B_1A_1$}=
\begin{pmatrix}
0 & 1 & 0 & 0 \\
0 & 0 & 1 & 0 \\
0 & 0 & 0 & 1 \\
0 & 0 & 0 & 0 \\
\end{pmatrix}
\begin{pmatrix}
a_{11} & a_{12} & a_{13} & a_{14} \\
a_{21} & a_{22} & a_{23} & a_{24} \\
a_{31} & a_{32} & a_{33} & a_{34} \\
a_{41} & a_{42} & a_{43} & a_{44} \\
\end{pmatrix}
=
\begin{pmatrix}
a_{21} & a_{22} & a_{23} & a_{24} \\
a_{31} & a_{32} & a_{33} & a_{34} \\
a_{41} & a_{42} & a_{43} & a_{44} \\
0      &0       &0       &0\\
\end{pmatrix}.
\end{equation*}
By comparing the aforementioned matrix identities, we derive
\begin{equation}
\label{23}
\left\{
\begin{aligned}
&a_{21}=a_{31}=a_{32}=a_{41}=a_{42}=a_{43}=0,\\
&a_{11}=a_{22}=a_{33}=a_{44},\\
&a_{12}=a_{23}=a_{34},\\
&a_{13}=a_{24}.\\
\end{aligned}
\right.
\end{equation}
Therefore, $A_1$ can be expressed as an upper triangular matrix
\begin{equation*}
\textbf{$A_1$}=
\begin{pmatrix}
a & b & c & d \\
0 & a & b & c \\
0 & 0 & a & b \\
0 & 0 & 0 & a \\
\end{pmatrix}.
\end{equation*}

\item\label{xuhao2} Since $B_1A_2=A_3B_1=\text{\large 0}_4$,
\begin{equation*}
\textbf{$B_1A_2$}=
\begin{pmatrix}
0 & 1 & 0 & 0 \\
0 & 0 & 1 & 0 \\
0 & 0 & 0 & 1 \\
0 & 0 & 0 & 0 \\
\end{pmatrix}
\begin{pmatrix}
a_{15} & a_{16} & a_{17} & a_{18} \\
a_{25} & a_{26} & a_{27} & a_{28} \\
a_{35} & a_{36} & a_{37} & a_{38} \\
a_{45} & a_{46} & a_{47} & a_{48} \\
\end{pmatrix}
=
\begin{pmatrix}
a_{25} & a_{26} & a_{27} & a_{28} \\
a_{35} & a_{36} & a_{37} & a_{38} \\
a_{45} & a_{46} & a_{47} & a_{48} \\
0&0&0&0
\end{pmatrix},
\end{equation*}
\begin{equation*}
\textbf{$A_3B_1$}=
\begin{pmatrix}
a_{51} & a_{52} & a_{53} & a_{54} \\
a_{61} & a_{62} & a_{63} & a_{64} \\
a_{71} & a_{72} & a_{73} & a_{74} \\
a_{81} & a_{82} & a_{83} & a_{84} \\
\end{pmatrix}
\begin{pmatrix}
0 & 1 & 0 & 0 \\
0 & 0 & 1 & 0 \\
0 & 0 & 0 & 1 \\
0 & 0 & 0 & 0 \\
\end{pmatrix}
=
\begin{pmatrix}
0&a_{51} & a_{52} & a_{53} \\
0&a_{61} & a_{62} & a_{63} \\
0&a_{71} & a_{72} & a_{73} \\
0&a_{81} & a_{82} & a_{83} \\
\end{pmatrix}.
\end{equation*}
Upon comparison, it becomes evident that
\begin{equation}
\label{gs24}
\left\{
\begin{aligned}
&a_{25}=a_{26}=a_{27}=a_{28}=0,\\
&a_{35}=a_{36}=a_{37}=a_{38}=0,\\
&a_{45}=a_{46}=a_{47}=a_{48}=0.\\
\end{aligned}
\right.
\end{equation}
and
\begin{equation}
\label{gs25}
\left\{
\begin{aligned}
&a_{51}=a_{52}=a_{53}=0,\\
&a_{61}=a_{62}=a_{63}=0,\\
&a_{71}=a_{72}=a_{73}=0,\\
&a_{81}=a_{82}=a_{83}=0.\\
\end{aligned}
\right.
\end{equation}
\end{enumerate}

\item Using an approach similar to the above, we derive $A_4B_2=B_2A_4$, $A_2B_2=B_2A_3=\text{\large 0}_4$ from $AB_2=B_2A$. Consequently, the block $A_4$ can be expressed as
\begin{equation*}
\textbf{$A_4$}=
\begin{pmatrix}
\alpha & \beta & \gamma & \eta \\
0 & \alpha & \beta & \gamma \\
0 & 0 & \alpha & \beta \\
0 & 0 & 0 & \alpha \\
\end{pmatrix}.
\end{equation*}
Meanwhile, from $A_2B_2=B_2A_3=\text{\large 0}_4$, it follows that
\begin{equation}
\label{gs26}
\left\{
\begin{aligned}
&a_{15}=a_{16}=a_{17}=0,\\
&a_{25}=a_{26}=a_{27}=0,\\
&a_{35}=a_{36}=a_{37}=0,\\
&a_{45}=a_{46}=a_{47}=0.\\
\end{aligned}
\right.
\end{equation}
and
\begin{equation}
\label{gs27}
\left\{
\begin{aligned}
&a_{61}=a_{62}=a_{63}=a_{64}=0,\\
&a_{71}=a_{72}=a_{73}=a_{74}=0,\\
&a_{81}=a_{82}=a_{83}=a_{84}=0.\\
\end{aligned}
\right.
\end{equation}

\item If $A$ commutes with $E_{1,4}$, $E_{1,8}$, $E_{5,4}$, $E_{5,8}$. The analysis is restricted to the case where $A$ commutes with $E_{1,4}$; other cases admit similar derivations.

Since
\begin{equation*}
\begin{aligned}
&AE_{1,4}=\left(\sum_{k=1}^{8}\sum_{s=1}^{8}a_{ks}E_{k,s}\right)E_{1,4}=\sum_{k=1}^{8}a_{k1}E_{k,4},\\
&E_{1,4}A=E_{1,4}\left(\sum_{k=1}^{8}\sum_{s=1}^{8}a_{ks}E_{k,s}\right)=\sum_{s=1}^{8}a_{4s}E_{1,s},
\end{aligned}
\end{equation*}
thus,
\begin{equation}
\label{gs28}
\left\{
\begin{aligned}
&a_{11}=a_{44}, \\
&a_{21}=a_{32}=\cdots=a_{81}=0,\\
&a_{41}=a_{42}=a_{43}=a_{45}=\cdots=a_{84}=0.
\end{aligned}
\right.
\end{equation}
Similarly, we can obtain
\begin{equation}
\label{gs29}
\left\{
\begin{aligned}
&a_{11}=a_{88}=a_{55}=a_{44}, \\
&a_{21}=a_{32}=\cdots=a_{81}=0,\\
&a_{81}=\cdots=a_{87}=0,\\
&a_{15}=\cdots=a_{45}=a_{65}=\cdots=a_{85}=0.\\
\end{aligned}
\right.
\end{equation}
\end{itemize}

In summary, according to Eqs. (\ref{gs24}) - (\ref{gs27}), the forms of blocks $A_2$ and $A_3$ can be respectively derived as
\begin{equation*}
\textbf{$A_2$}=
\begin{pmatrix}
0 & 0 & 0 & a_{18} \\
0 & 0 & 0 & 0 \\
0 & 0 & 0 & 0 \\
0 & 0 & 0 & 0 \\
\end{pmatrix},
\textbf{$A_3$}=
\begin{pmatrix}
0 & 0 & 0 & a_{54} \\
0 & 0 & 0 & 0 \\
0 & 0 & 0 & 0 \\
0 & 0 & 0 & 0 \\
\end{pmatrix}.
\end{equation*}
Moreover, from Eq. (\ref{gs29}), we conclude that any matrix $A$ commuting with all generator matrices of the algebra $\mathcal{B}_{2,1,5}$ must of the following form:
$$
A=
\left(
\begin{array}{cccc:cccc}
a & b & c & d      & 0   & 0      & 0      & \mu     \\
0 & a & b & c      & 0   & 0      & 0      & 0          \\
0 & 0 & a & b      & 0   & 0      & 0      & 0          \\
0 & 0 & 0 & a      & 0   & 0      & 0      & 0          \\ \hdashline
0 & 0 & 0 &\lambda & a   & \beta  & \gamma & \eta       \\
0 & 0 & 0 & 0      & 0   & a      & \beta  & \gamma     \\
0 & 0 & 0 & 0      & 0   & 0      & a      & \beta      \\
0 & 0 & 0 & 0      & 0   & 0      & 0      & a          \\
\end{array}
\right),
$$
that is, $A=aE_8+b B_1+c B_1^2+d B_1^3+\beta B_2+\gamma B_2^2+\eta B_2^3+\mu E_{1,8}+\lambda E_{5,4}\in\mathcal{B}_{2,1,5}$. Therefore, $\mathcal{B}_{2,1,5}$ is a maximal commutative subalgebra.

Next, we compute the length of $\mathcal{B}_{2,1,5}$. From the previous results, we can see that its nilpotency index is $N=4$. According to Theorem \ref{dingli2}, we conclude that
$$\ell\left(\mathcal{B}_{2,1,5}\right)\le N-1=3.$$

To prove the lower bound $\ell\left(\mathcal{B}_{2,1,5}\right)\ge k+1$, we consider the following generating system $\mathcal{S}$ for $\mathcal{B}_{2,1,5}$:
\begin{equation*}
\mathcal{S}=\{B_1, B_2, E_{1,8}, E_{5,4}\}.
\end{equation*}
Note that $B_1^3=E_{1,4}$ and $B_2^3=E_{5,8}$. Thus, the matrix set $\mathcal{S}$ is a generating system for $\mathcal{B}_{2,1,5}$. Meanwhile, we observe that the following equations
\begin{equation*}
\begin{aligned}
&B_1B_2=B_2B_1=0, \\
&B_vE_{i,j}=E_{i,j}B_v=0, ~\text{where}~v=1,2, \\
&E_{1,8}E_{5,4}=E_{5,4}E_{1,8}=0   \\
\end{aligned}
\end{equation*}
hold. Moreover, from the special properties of matrices $B_1$ and $B_2$, we obtain
\begin{equation*}
\begin{aligned}
&B_1^s=\sum_{h=0}^{3-s}E_{1+h,1+h+s}\in L_s(S),~\text{where}~s\in\{1, 2, 3\}, \\
&B_2^t=\sum_{h=0}^{3-t}E_{5+h,5+h+t}\in L_t(S),~\text{where}~t\in\{1, 2, 3\}.\\
\end{aligned}
\end{equation*}
and we have that $B_1^s\notin L_{s-1}(S)~\text{and}~B_2^t\notin L_{t-1}(S)$ for any $s, t\in \{2, 3\}$, that is, $B_1^s, B_2^t$ can not be represented as a linear combination of words of smaller length. Thus, $\ell\left(\mathcal{B}_{2,1,5}\right)\ge 3$. Therefore, $\ell\left(\mathcal{B}_{2,1,5}\right)= 3$.

\end{proof}

Below we provide an illustration based on Proposition \ref{mingtijida} and compare it with Example \ref{li1}.

\begin{example}
When $m=1$, $n=8$, $k=2$, we consider the subalgebra $\mathcal{B}_{2,1}\subseteq\mathrm{M}_8(\mathbb{F})$ generated by the following matrices
$$\mathbb{E}_8, B=E_{1,2}+E_{2,3}+E_{3,4}, E_{1,j},$$
where $4\le j\le 8$, and $j\in\mathbb{Z}$. Then $\mathcal{B}_{2,1}\subseteq \mathrm{M}_8(\mathbb{F})$ is a maximal commutative subalgebra of length $\ell\left(\mathcal{B}_{2,1}\right)=3$.
\end{example}

\begin{proof}
By direct computation, we have that $BE_{1,j}=E_{1,j}B=0$ and $E_{1,j}E_{1,p}=0$ for any $E_{1,j}$, $E_{1,p}$. By construction, we also have that
$$B^s=\displaystyle{\sum_{h=0}^{3-s}}E_{1+h,1+h+s},~\text{where}~s\in\{1, 2, 3\}.$$

Therefore, since all generator matrices commute, the commutativity of $\mathcal{B}_{2,1}$ follows immediately. We now prove the maximality of algebra $\mathcal{B}_{2,1}$. 

From the definition of a maximal commutative subalgebra~\ref{dingyijida}, every matrix $A=\left(a_{i,j}\right)_{8\times 8}$ in $\mathrm{M}_8\left(\mathbb{F}\right)$ commutes with all matrices in $\mathcal{B}_{2,1}$, it implies that $A$ commutes with any generator matrices of $\mathcal{B}_{2,1}$. The matrix $A$ admits the following block decomposition:
$$
	A=
	\left(
	\begin{array}{c:c}
	A_{11}& A_{12} \\ \hdashline
	A_{21}& A_{22} \\
	\end{array}
	\right),
$$
where the blocks $A_{11}$ and $A_{22}$ are $4\times 4$ matrices. The generator matrices $B, E_{1,j}$ can be expressed as
$$
	B_{1}=
	\left(
	\begin{array}{c:c}
	B& \text{\large 0}_4 \\ \hdashline
	\text{\large 0}_4& \text{\large 0}_4 \\
	\end{array}
	\right),
\quad
    E_{1,j}=
	\left(
	\begin{array}{c:c}
	\text{\large 0}_4& C \\ \hdashline
	\text{\large 0}_4& \text{\large 0}_4 \\
	\end{array}
	\right).
$$
\begin{enumerate}[label=\circled{\arabic*}]
\item If matrices $A$ and $B$ commute, Eqs. (\ref{23}) -~(\ref{gs25}) follow from computations parallel to those in Example \ref{li1}, \ref{xuhao1}, and \ref{xuhao2}. Consequently, block $A_{11}$ takes the following form:
\begin{equation*}
\textbf{$A_{11}$}=
\begin{pmatrix}
\alpha & \beta & \gamma & \lambda \\
0 & \alpha & \beta & \gamma \\
0 & 0 & \alpha & \beta \\
0 & 0 & 0 & \alpha \\
\end{pmatrix}.
\end{equation*}
\item Since $A$ commutes with $E_{1,j}$, where $4\le j\le 8$ and $j\in\mathbb{Z}$. According to $AE_{1,j}=E_{1,j}A$, we obtain
\begin{equation*}
\begin{aligned}
&AE_{1,j}=\left(\sum_{k=1}^{8}\sum_{s=1}^{8}a_{k,s}E_{k,s}\right)E_{1,j}=\sum_{k=1}^{8}a_{k,1}E_{k,j},\\
&E_{1,j}A=E_{1,j}\left(\sum_{k=1}^{8}\sum_{s=1}^{8}a_{k,s}E_{k,s}\right)=\sum_{s=1}^{8}a_{j,s}E_{1,s}.
\end{aligned}
\end{equation*}
Consequently, by comparing the coefficients on both sides of the above equation, we obtain the following system of equations
\begin{equation}
\label{gs30}
\left\{
\begin{aligned}
&a_{1,1}=a_{j,j}, \\
&a_{2,1}=a_{3,1}=\cdots=a_{8,1}=0,\\
&a_{j,1}=a_{j,2}=\cdots=a_{j,j-1}=a_{j,j+1}=\cdots=0.
\end{aligned}
\right.
\end{equation}
\end{enumerate}
Consequently, according to Eqs. (\ref{23}) - (\ref{gs25}) and~(\ref{gs30}), we find that the matrix $A$ commuting with all generator matrices of $\mathcal{B}_{2,1}$ takes the form:
$$
A=
\left(
\begin{array}{cccc:cccc}
\alpha & \beta   & \gamma  & \lambda & a_{1,5}& a_{1,6}& a_{1,7}& a_{1,8}     \\
0      & \alpha  & \beta   & \gamma  & 0      & 0      & 0      & 0          \\
0      & 0       & \alpha  & \beta   & 0      & 0      & 0      & 0          \\
0      & 0       & 0       & \alpha  & 0      & 0      & 0      & 0          \\ \hdashline
0      & 0       & 0       & 0       & \alpha & 0      & 0      & 0          \\
0      & 0       & 0       & 0       & 0      & \alpha & 0      & 0          \\
0      & 0       & 0       & 0       & 0      & 0      & \alpha & 0          \\
0      & 0       & 0       & 0       & 0      & 0      & 0      & \alpha     \\
\end{array}
\right),
$$
that is, $A=\alpha E_8+\beta B+\gamma B^2+\lambda B^3+\sum_{j=5}^{8}a_{1,j}E_{1,j} \in\mathcal{B}_{2,1}$. Thus, $\mathcal{B}_{2,1}$ is a maximal commutative subalgebra.

Moreover, the length of $\mathcal{B}_{2,1}$ can be computed following a procedure analogous to Example \ref{li1}. Therefore, $\ell\left(\mathcal{B}_{2,1}\right)= 3.$
\end{proof}

\hspace*{\fill}

A comparison of these two examples reveals fundamentally distinct constructions of the maximal commutative subalgebras. However, the methods for computing the lengths of algebras $\mathcal{B}_{m,k,l}$ and $\mathcal{B}_{m,k}$ are analogous. The main idea is to obtain a specific generator matrix through linear combinations of generator matrices, thereby identifying a generating system $\mathcal{S}$ that establishes the upper bound for the algebra's length. Moreover, we find that the lengths of these two classes of algebras depend solely on the parameter $k$, which to some extent reflects the complexity of the algebraic structures.

\hspace*{\fill}

The author declares no conflict of interest.

\end{document}